\theoremstyle{plain}
\newtheorem{theorem}{Theorem}[section]
\newtheorem{proposition}[theorem]{Proposition}
\newtheorem{lemma}[theorem]{Lemma}
\newtheorem{corollary}[theorem]{Corollary}
\theoremstyle{definition}
\newtheorem{remark}[theorem]{Remark}
\numberwithin{equation}{section}
\newcommand{\bfa}{\mathbf{a}}
\newcommand{\bft}{\mathbf{t}}
\newcommand{\bfT}{\mathbf{T}}
\newcommand{\Gal}{{\rm Gal}}
\newcommand{\ZZ}{\mathbb{Z}}
\newcommand{\QQ}{\mathbb{Q}}
\newcommand{\FF}{\mathbb{F}}
\newcommand{\cP}{\mathcal{P}}
\newcommand{\geom}{{\rm geom}}
\renewcommand{\deg}{\mathrm{deg}}
\begin{document}

\title{Explicit Hilbert's Irreducibility Theorem in Function Fields}
 \dedicatory{Dedicated to the memory of Wulf-Dieter Geyer, 1939-2019.}
\date{\today}

\author{Lior Bary-Soroker}

\address{Raymond and Beverly Sackler School of Mathematical Sciences, Tel Aviv University, Tel Aviv 69978, Israel}
\email{barylior@tauex.tau.ac.il}

\author{Alexei Entin}

\address{Raymond and Beverly Sackler School of Mathematical Sciences, Tel Aviv University, Tel Aviv 69978, Israel}
\email{aentin@tauex.tau.ac.il}

\begin{abstract}
	We prove a quantitative version of Hilbert's irreducibility theorem for function fields: If $f(T_1,\ldots, T_n,X)$ is an irreducible polynomial over the field of rational functions over a finite field $\mathbb{F}_q$ of characteristic $p$, then the proportion of $n$-tuples $(t_1,\ldots, t_n)$ of monic polynomials of degree $d$ for which $f(t_1,\ldots, t_n,X)$ is reducible out of all $n$-tuples of degree $d$ monic polynomials is $O(dq^{-d/2})$.
\end{abstract}

\maketitle

\section{Introduction}

Hilbert's irreducibility theorem says that if $f(T_1,\ldots, T_n,X)\in \QQ[T,X]$ is a multivariate  irreducible polynomial of positive degree in $X$, then the set 
\[
	H_{f} = \{ (t_1,\ldots, t_n)\in \QQ^n : f(t_1,\ldots, t_n,X) \in \QQ[X] \textnormal{ is irreducible}\}\subseteq  \QQ^n
\] 
is non-empty. We call such a set a (basic) \emph{Hilbert set}.

The original motivation of Hilbert was in the inverse Galois problem, see for example \cite{Serre}.  Since then the theorem found numerous applications to different problems in algebra and number theory; e.g., for constructing elliptic curves of high rank, see \cite{Serre2}.

There are several types of qualitative results that indicate that any Hilbert set $H_f$ is big (see \cite[\S 13]{FJ} for reference): $H_f$ is infinite; $H_f$ is Zariski dense; $H_f$ is $S$-adic dense,  there exist many finite sets of primes $S$ for which $H_f$ is $S$-adic open.  

A more quantitative variant is by counting:
for $N\geq 1$, set 
\[
	H_f(N) = \#( H_f \cap \{ (t_1,\ldots, t_n)\in \ZZ^n : N\leq t_i< 2 N ,\ i=1,\ldots, n\})
\]
to be the number integral tuples in $H$ with entries in $[N,2N)$. 
S.~D.~Cohen \cite{Cohen} proved that 
\begin{equation}\label{eq:SDCohen}
	\frac{H_f(N)}{N^n} =  1+O_f\Big(\frac{\log N}{\sqrt{N}}\Big), \qquad N\to \infty.
\end{equation}
Here the notation $O_f$ means that the implied constant may depend on $f$, but not on $N$.
The polynomial $f(T,X)=X^2-(T_1+T_2+\cdots +T_n)$ shows that the error term is optimal (up to the logarithmic factor). 

The qualitative results mentioned above were proved also in the global function field setting. However to the best of our knowledge, \eqref{eq:SDCohen} has not been proved in function fields. An additional challenge arising in this setting is having to deal with inseparable polynomials, a phenomenon that does not occur in number fields. 

 The goal of this work is to prove a function field analogue. We denote by $\FF_q$ the finite field of $q$ elements. We let $\FF_q(u)$ be the field of rational functions in $u$ with coefficients in $\FF_q$. The set $M_{d}\subseteq \FF_q(u)$ of monic polynomials of degree $d$ takes the role of the interval $[N,2N)$ with $q^d=\#M_d$ being the analog of $N = \#[N,2N)\cap \ZZ$. 

\begin{theorem}\label{thm:main}
	Let $K = \FF_q(u)$ be of characteristic $p$, let $f(T_1,\ldots, T_n,X) \in K[T_1,\ldots, T_n,X]$ be an irreducible polynomial of positive degree in $X$. Let $H_f(d)$ be the number of tuples $(t_1(u),\ldots, t_n(u))\in M_d^n$ with $f(t_1,\ldots, t_n,X) \in K[X]$ irreducible. Then, 
	\begin{equation}\label{eq:main}
		\frac{H_f(d)}{q^{dn}} = 1+O_{f,q}(dq^{-d/2}),
	\end{equation}
	where the implicit constant may depend on $f,q$.
\end{theorem}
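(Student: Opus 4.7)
My plan is to combine a Galois-theoretic reduction with effective point counting over $\FF_q$. Let $L = K(T_1,\ldots,T_n)$ and let $E/L$ be the splitting field of $f$, with Galois group $G = \Gal(E/L)$. Since $f$ is irreducible in $K[\bfT, X]$ (hence also in $L[X]$ by Gauss's lemma), $G$ acts transitively on the roots of $f$. A standard specialization argument (away from the ramification locus of $\mathrm{Spec}(\mcO_E) \to \AA^n_K$, with $\mcO_E$ the integral closure of $K[\bfT]$ in $E$) shows that $f(\bft, X)$ is reducible over $K$ exactly when $\bft \in \AA^n(K)$ lifts to a $K$-rational point of $Y_{H^*}$, the normalization of $\AA^n_K$ in the fixed field $E^{H^*}$, for some proper subgroup $H^* < G$ acting intransitively on the roots. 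As there are only finitely many such subgroups, it suffices to bound, for each $H^*$, the number of $\bft \in M_d^n$ lifting to $Y_{H^*}(K)$.

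\textbf{Step 2 (Parameterization by a resolvent).} Fix $H^*$ of index $m = [G:H^*]$ and choose a primitive element $\beta$ for $E^{H^*}/L$; its minimal polynomial $g(\bfT, Y) \in L[Y]$ is irreducible of $Y$-degree $m$. After clearing $u$-denominators we may assume $g \in \FF_q[u, \bfT, Y]$ is primitive. Then $\bft$ lifts to $Y_{H^*}(K)$ iff $g(u, \bft, Y) \in \FF_q(u)[Y]$ has a root $y \in \FF_q(u)$. Writing such a root in lowest terms as $y = P(u)/Q(u)$ with $Q$ monic, Gauss's lemma gives $\deg P, \deg Q \leq Cd$ for some $C = C(f, H^*)$. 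Thus triples $(\bft, P, Q)$ form $\FF_q$-points of an algebraic subvariety $\mathcal{V}_{H^*, d} \subset \AA^{nd + 2Cd + O(1)}_{\FF_q}$ cut out by the identity $Q^m g(u, \bft, P/Q) \equiv 0$ in $\FF_q[u]$, which amounts to $O(d)$ polynomial equations of bounded degree.

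\textbf{Step 3 (Dimension bound and Lang--Weil).} I would stratify $\mathcal{V}_{H^*, d}$ by the Newton polygon of $g(u, \bft, Y)$ at $u = \infty$, of which there are polynomially many types in $d$. On each stratum, a Bezout-type dimension chase --- using the irreducibility of $g$ over $L$ and the fact that $E^{H^*}/L$ is a nontrivial extension --- should yield the bound $nd - d/m + O(1)$ on the dimension; the worst case $m = 2$ gives $\dim \mathcal{V}_{H^*, d} \leq nd - d/2 + O(1)$. Applying Lang--Weil to each stratum and summing yields $|\mathcal{V}_{H^*, d}(\FF_q)| = O_{f, q}(d \, q^{nd - d/2})$, with the factor of $d$ absorbing both the number of strata and the polynomial dependence of Lang--Weil constants on $\deg \mathcal{V}_{H^*, d}$. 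Since each bad $\bft$ corresponds to at most $m$ pairs $(P, Q)$, summing over the finitely many $H^*$ gives the theorem. (This dimension count is sanity-checked by the case $f = X^2 - T$: one then gets $m = 2$, $\dim \mathcal{V}_{H^*, d} = d/2$, and exactly $q^{d/2}$ bad $\bft$, which saturates the bound.)

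\textbf{Main obstacle.} The crux is the dimension analysis of Step 3: on each Newton-polygon stratum, the $O(d)$ defining equations must cut out a variety of the expected codimension, and this requires delicate use of the irreducibility of $g$ over $L$ together with the specific $\bfT$- and $u$-degree structure of its coefficients. A secondary difficulty, new in positive characteristic, is inseparability of $f(\bft, X)$: for specializations where $f(\bft, X)$ is a $p$-th power in $K[X]$, the resolvent argument breaks down and needs separate handling, but such $\bft$ form a low-dimensional locus (of size at most $O(q^{nd/p})$), safely within the claimed error.
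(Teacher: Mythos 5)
Your route (reduce to counting $\bft$ that lift to $K$-points of the intermediate covers $Y_{H^*}$, then count such $\bft$ geometrically via a parameter variety of triples $(\bft,P,Q)$ and Lang--Weil) is genuinely different from the paper, which instead runs an explicit Chebotarev theorem over the reductions $f_P$ through Hsu's function-field large sieve. But your Step 3 has a genuine gap that coincides with the entire content of the theorem. The only thing that irreducibility of the resolvent $g$ over $L=K(\bfT)$ gives you for free is that the projection $\mathcal{V}_{H^*,d}\to\AA^{nd}$ is not dominant, i.e.\ codimension $\geq 1$ of the image; you need codimension $d/2$, and no Bezout-type count extracts that from the $O(d)$ equations $Q^m g(u,\bft,P/Q)\equiv 0$ (whose naive expected codimension does not come out to $d/2$ either). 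This is exactly the ``thin set has $O(N^{n-1/2}\log N)$ points'' statement, whose only known proofs in this generality go through the large sieve (Cohen's method, which the paper transplants via \cite{Hsu96}); asserting that a stratified dimension chase ``should yield'' it is assuming the conclusion. Note also that your stated bound $nd-d/m+O(1)$ has the wrong monotonicity: it \emph{increases} with $m=[G:H^*]$, so $m=2$ is the best case, not the worst, and for $m\geq 3$ your own formula exceeds $nd-d/2$. (The heuristic you presumably intend, from $Y^m=T_1$, is $nd-d(1-1/m)$.)

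The treatment of inseparability is also wrong, not merely secondary. If $f(\bfT,X)=g(\bfT,X^{p^\alpha})$ with $g$ separable, then transitivity of the specialized group only gives irreducibility of $g(\bft,X)$; by Uchida's criterion one additionally needs some coefficient $g_i(\bft)$ to not be a $p$-th power in $K$, and the exceptional locus is $\{\bft: g_i(\bft)\in K^p \text{ for all } i\}$, which is the same as your locus where $f(\bft,X)$ is a $p$-th power. Your claimed size $O(q^{nd/p})$ for it is false for $n\geq 2$: if the relevant non-$p$-th-power coefficient is, say, $h=T_1$ with $p=2$ and $n=2$, the locus is $\{(t_1,t_2): t_1\in K^2\}$, of size $q^{d/2}\cdot q^{d}=q^{nd-d/2}$, which dwarfs $q^{nd/p}=q^{d}$. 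The true bound is $O(q^{nd-d/2+1})$, it is sharp, and it is not a soft dimension count: the paper's Lemma~\ref{lem:pthpower} proves it by reducing to one variable and running a Hensel-lifting argument for $p$-th power residues modulo $r^2$ with $r$ an auxiliary prime of degree $\lfloor d/2\rfloor$. Both of these points would need to be supplied before your outline becomes a proof.
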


To compare with \eqref{eq:SDCohen} we note that $q^d$ corresponds to $N$ and $d$ to $\log N$, so the error term in \eqref{eq:main} is the perfect analog of \eqref{eq:SDCohen}.
\begin{remark} In fact we will show that if one fixes $\deg_{\bfT,X}f$ and a constant $\lambda$ then a similar bound holds uniformly whenever $\deg_uf\le\lambda d$. See Theorem \ref{thm:nonsep} for the precise statement.\end{remark}

When $f$ is separable in the variable $X$, we will prove \eqref{eq:main} using  the function-field large sieve inequality due to Hsu \cite{Hsu96}, a proof that goes in the same lines of the proof of \eqref{eq:SDCohen}. When $f$ is inseparable, those ideas seem to fail. Instead we use an approach of Uchida \cite{Uchida}.

Theorem~\ref{thm:main} has many consequences. The first is to Galois theory, which states that if $L/K(T_1,\ldots, T_n)$ is a Galois extension, then for most $(t_1,\ldots, t_n)\in M_d^n$, the Galois group remains the same under the specialization $T_i\mapsto t_i$. See Corollary~\ref{cor:sep} for a precise statement. 

Theorem 1.1 gives a new proof to the following assertions, that also can be derived from \cite[Thm 13.3.5]{FJ}.
\begin{corollary}
	Let $f(T_1,\ldots, T_n,X)\in K[T_1,\ldots, T_n,X]$ be irreducible of positive degree in $X$ and $H_f$ the set of $(t_1,\ldots, t_n) \in \FF_q[u]^n$ for which $f(t_1,\ldots,t_n ,X)\in \FF_q(u)[X]$ is irreducible. 
	\begin{enumerate}
		\item $H_f$ contains a Zariski dense set of prime $n$-tuples; i.e., for every nonzero $g(T_1,\ldots, T_n)$, there exists $(t_1,\ldots,t_n) \in H_f$ such that $g(t_1,\ldots,t_n )\neq 0$ and $t_i\in \FF_q[u]$ is irreducible for all $i$.
		\item If $S$ is a finite set of primes of $\FF_q(u)$, then $H_f$ is $S$-adically dense. 
	\end{enumerate}
\end{corollary}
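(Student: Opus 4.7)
The plan is to deduce both parts from Theorem \ref{thm:main} by a counting argument. In each case I exhibit a family of tuples in $M_d^n$ meeting the desired side conditions whose cardinality exceeds the reducibility count $O_{f,q}(dq^{dn-d/2})$ provided by the theorem; this forces the intersection with $H_f$ to be non-empty for $d$ sufficiently large.

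For (1), fix a nonzero $g\in K[T_1,\ldots,T_n]$ of total degree $D$. The prime polynomial theorem in $\FF_q[u]$ supplies $(1+o(1))q^{dn}/d^n$ tuples in $M_d^n$ whose every coordinate is a monic irreducible polynomial, while the Schwartz--Zippel lemma, applied over the infinite field $K$ with $M_d$ as the finite evaluation set, bounds the tuples with $g(t_1,\ldots,t_n)=0$ by $Dq^{d(n-1)}$. Since $q^{dn}/d^n$ dominates both $Dq^{d(n-1)}$ and $dq^{dn-d/2}$ as $d\to\infty$, some all-prime tuple in $M_d^n\cap H_f$ avoids $\{g=0\}$; as $g$ was arbitrary, the prime tuples in $H_f$ form a Zariski dense subset of $\AA^n$.

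For (2) I may assume $S$ consists of finite primes, since the image of $\FF_q[u]^n$ sits inside $\prod_{v\in S}\mcO_v^n$, which is the relevant ambient space for density. A basic open set there is cut out by congruence conditions $t_i\equiv r_i\pmod{\mathfrak{m}}$ with $\mathfrak{m}=\prod_{v\in S}v^{N_v}$ and $c=\deg\mathfrak{m}$ fixed. A direct fiber computation for the reduction map $M_d\to \FF_q[u]/\mathfrak{m}$, which is surjective with each fiber of size $q^{d-c}$ for $d\geq c$, shows that exactly $q^{n(d-c)}$ tuples in $M_d^n$ satisfy the congruences. Since $q^{n(d-c)}\gg dq^{dn-d/2}$ as $d\to\infty$ with $c$ fixed, Theorem \ref{thm:main} guarantees that at least one such tuple lies in $H_f$, proving $S$-adic density. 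The main point in both parts is the strength of the exponential savings $q^{-d/2}$ in Theorem \ref{thm:main}: it easily beats the polynomial-in-$d$ loss $1/d^n$ from the prime polynomial theorem and the constant loss $q^{-nc}$ from the congruence condition, so the exceptional reducibility set cannot exhaust either family. Beyond this comparison, both arguments are routine bookkeeping.
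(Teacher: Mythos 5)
Your argument is correct and is essentially the paper's own: the paper deduces the corollary in one line by noting that the all-prime tuples (off any hypersurface) and any $S$-adic congruence class have density in $M_d^n$ decaying slower than the $O(dq^{-d/2})$ exceptional density of Theorem~\ref{thm:main}, which is exactly the comparison you carry out. You merely make the counts explicit (prime polynomial theorem giving $\sim q^{dn}/d^n$, Lemma~\ref{lem:bound} for the $g=0$ locus, and the fiber count $q^{n(d-c)}$ for the congruence conditions), so there is nothing to add.
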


The implication of the corollary from Theorem~\ref{thm:main} is immediate: the primes have density bigger than  $q^{-\epsilon}$, fo any $\epsilon$ and any $S$-adic open set has positive density; hence those sets cannot be contained in the complement of $H_f$ by Theorem~\ref{thm:main}.

\subsection*{Acknowledgments}
The authors thank Dan Haran for some illuminating discussions of $p$-th powers in characteristic $p$ and Arno Fehm for helpful remarks that improved the presentation of the paper.

The first named author was partially supported by grant no.\ 702/19 of the Israel Science Foundation and the second by grant no.\ 2507/19 of the Israel Science Foundation.

\section{Set-up and preliminaries}
Let $\FF_q$ be a finite field with $q$ elements and of characteristic $p$, let $K=\FF_q(u)$ be the field of rational functions in $u$. We denote by $M_d$ the subset of $K$ of monic polynomials of degree $d$. We denote tuples by bolded letters, e.g.\ we write $\bft$ for $(t_1,\ldots, t_n)$ and $\bfT$ for $(T_1,\ldots,T_n)$.

To a polynomial $f\in K[\bfT,X]$ in $n+1$ variables over the field $K$ we attach the following data. 
Let $\deg_\bfT f$ (resp. $\deg_{\bfT,X}f$) denote the total degree of $f$ in the variables $T_1,\ldots,T_n$ (resp. $T_1,\ldots,T_n,X$) and denote $\deg_u(f)=\max\left(\deg_u\tilde{f},\deg_ua\right)$, where $f=\tilde{f}/a,\tilde{f}\in\FF_q[u,\bfT,X],a\in\FF_q[u]$ is a reduced fraction.

It is convenient to work with polynomials in $\FF_q[u,\bfT,X]$ that are monic in $X$. The following lemmas allow us to reduce to this case.
\begin{lemma}\label{lem:monic}
	Let $f\in K[\bfT,X]$ be a polynomial, let $a(u)\in \FF_q[u]$ be the common denominator of the coefficients of $f$, $\tilde{f}=af\in\FF_q[u,\bfT,X]$, $b\in \FF_q[u,\bfT]$ the leading coefficient of $\tilde{f}$ (in the variable $X$) and $r=\deg_X f$. Then the polynomial 
	\[
		g (\bfT,X) =  b^{\deg_Xf-1} \tilde{f}\Big(\bfT, \frac{X}{b}\Big) \in \FF_q[u,\bfT,X]
	\]
	satisfies:
	\begin{enumerate}
		\item $g$ is monic in $X$.
		\item $\deg_X g=\deg_X f$, $\deg_\bfT g\le\deg_\bfT f\cdot\deg_Xf$ and $\deg_ug\le\deg_u f\cdot\deg_X f$
		\item For all $\bft \in M_d^n$ with $b(\bft)\neq 0$ each of the polynomials $g(\bft,X)$ and $f(\bft,X)$ is irreducible iff the other is. 		
	\end{enumerate}
\end{lemma}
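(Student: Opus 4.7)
The plan is direct computation: write $\tilde f(\bfT,X)=\sum_{i=0}^r c_i(\bfT)X^i$ with $c_r=b$, and expand
\[
g(\bfT,X)=b^{r-1}\tilde f(\bfT,X/b)=\sum_{i=0}^r c_i(\bfT)\,b^{r-1-i}X^i.
\]
The coefficient of $X^r$ is $c_r\cdot b^{-1}=1$, and for $i<r$ the exponent $r-1-i$ is nonnegative, so $g\in\FF_q[u,\bfT,X]$ and is monic of degree $r$ in $X$. This establishes item (1).

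For item (2), I would bound each coefficient $c_i\,b^{r-1-i}$ termwise. Its $\bfT$-degree is at most $\deg_\bfT c_i+(r-1-i)\deg_\bfT b$, and since both $\deg_\bfT c_i$ and $\deg_\bfT b$ are bounded by $\deg_\bfT\tilde f\le \deg_\bfT f$, we get $\le(r-i)\deg_\bfT f\le r\deg_\bfT f=\deg_X f\cdot\deg_\bfT f$. The same argument with $u$ in place of $\bfT$ gives the $\deg_u$ bound, using that by the definition of $\deg_u(\cdot)$ one has $\deg_u\tilde f\le\deg_u f$ (hence $\deg_u c_i,\deg_u b\le\deg_u f$).

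For item (3), the key observation is that when $\bft\in M_d^n$ and $b(\bft)\neq 0$, both $a\in\FF_q[u]^\times\subset K^\times$ and $b(\bft)\in K^\times$. Then I would pass through the chain
\[
f(\bft,X)\ \longleftrightarrow\ \tilde f(\bft,X)=a\cdot f(\bft,X)\ \longleftrightarrow\ \tilde f(\bft,X/b(\bft))\ \longleftrightarrow\ b(\bft)^{r-1}\tilde f(\bft,X/b(\bft))=g(\bft,X),
\]
in which each arrow is either multiplication by a nonzero element of $K$ or the $K$-algebra automorphism of $K[X]$ sending $X\mapsto X/b(\bft)$ (invertible via $X\mapsto b(\bft)X$). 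All these operations preserve irreducibility in $K[X]$, yielding (3).

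There is no serious obstacle; this is a standard denominator-and-leading-coefficient normalization. The only points requiring a bit of care are the distinction between $b\in\FF_q[u,\bfT]$ and its specialization $b(\bft)\in\FF_q[u]$, and the fact that $X\mapsto X/b(\bft)$ is a $K$-algebra automorphism but not an $\FF_q[u]$-algebra map, so irreducibility must indeed be tested in $K[X]$ (which matches the setting in the main theorem).
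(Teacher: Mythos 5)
Your proof is correct and is exactly the direct verification the paper has in mind -- its own proof is just the one line ``Straightforward from the definition of $g$,'' and your expansion $g=\sum_i c_i b^{r-1-i}X^i$ together with the unit-multiplication/automorphism argument for item (3) supplies precisely those omitted details.
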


\begin{proof} Straightforward from the definition of $g$.\end{proof}

The following lemma will be useful in several instances.

\begin{lemma}\label{lem:bound}
	Let $h(\bfT) \in K[T_1,\ldots, T_n]$ be a nonzero polynomial. Then the number of tuples $\bft\in M_{d}^n$ such that $h(\bft)=0$ is at most $\sum_j \deg_{T_j} h \cdot q^{d(n-1)}$. 
\end{lemma}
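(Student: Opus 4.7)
The plan is to prove this by induction on $n$ via a Schwartz--Zippel style argument, viewing $h$ as a polynomial in $T_1$ whose coefficients live in $K[T_2,\ldots,T_n]$.

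For the base case $n=1$, a nonzero $h\in K[T_1]$ of $T_1$-degree $d_1=\deg_{T_1}h$ has at most $d_1$ roots in $K$, so at most $d_1$ of them lie in $M_d$. This matches the claimed bound since $q^{d(n-1)}=q^0=1$.

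For the inductive step, write
\[
 h(\bfT)=\sum_{i=0}^{k}h_i(T_2,\ldots,T_n)\,T_1^i,\qquad h_k\neq 0,\quad k=\deg_{T_1}h.
\]
Partition the tuples $\bft\in M_d^n$ with $h(\bft)=0$ according to whether the leading coefficient vanishes after substitution. If $h_k(t_2,\ldots,t_n)=0$, then by the inductive hypothesis applied to the nonzero polynomial $h_k\in K[T_2,\ldots,T_n]$, the tuple $(t_2,\ldots,t_n)$ lies in a set of size at most $\sum_{j\ge 2}\deg_{T_j}h_k\cdot q^{d(n-2)}\le\sum_{j\ge 2}\deg_{T_j}h\cdot q^{d(n-2)}$, and $t_1$ can be any of the $q^d$ elements of $M_d$. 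If instead $h_k(t_2,\ldots,t_n)\neq 0$, then $h(T_1,t_2,\ldots,t_n)\in K[T_1]$ is a nonzero polynomial of degree exactly $k$, hence has at most $\deg_{T_1}h$ roots in $K$ and a fortiori in $M_d$, while $(t_2,\ldots,t_n)$ ranges over at most $q^{d(n-1)}$ tuples.

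Adding the two contributions gives
\[
 \#\{\bft\in M_d^n : h(\bft)=0\} \le \sum_{j\ge 2}\deg_{T_j}h\cdot q^{d(n-1)} + \deg_{T_1}h\cdot q^{d(n-1)} = \sum_{j=1}^n\deg_{T_j}h\cdot q^{d(n-1)},
\]
completing the induction. There is no real obstacle here: the only point that needs a little care is ensuring the leading coefficient $h_k$ is genuinely nonzero in $K[T_2,\ldots,T_n]$ so that the inductive hypothesis applies, which follows from the choice of $k$ as the largest index with $h_k\neq 0$.
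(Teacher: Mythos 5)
Your proof is correct and follows essentially the same inductive Schwartz--Zippel argument as the paper's. The only cosmetic differences are that you peel off $T_1$ rather than $T_n$ and split on the vanishing of the single leading coefficient $h_k$ rather than on whether all coefficients $h_i$ vanish simultaneously; both splits yield the identical bound.
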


\begin{proof}
	We apply induction. If $n=1$, then $h(T)$ has at most $\deg_T h$ solutions in $K$, hence in $M_{d}$. Assume $n\geq 2$ and write $h(\bfT) = \sum_{i} h_i(T_1,\ldots, T_{n-1})T_n^i$, where not all of the $h_i$-s are zero. 
	Then, by induction, 
	\[
		\#\{ \bft\in M_{d}^n:h(\bft)=0\} \leq  \sum_{\substack{\bft'\in M_{d}^{n-1} \\ \forall i:h_i(\bft') =0}} q^d + \sum_{\substack{\bft'\in M_{d}^{n-1} \\ \exists i:h_i(\bft') \neq 0}} \deg_{T_n} h \le \sum_j\deg_{T_j}h\cdot q^{n-1},
	\]
	as needed.
\end{proof}

The last two lemmas immediately imply the following result which allows us to reduce to monic polynomials with coefficients in $\FF_q[u,\bfT]$:

\begin{corollary}\label{cor:reduction}
	Let $f\in K[\bfT,X]$ be a polynomial and  $g(\bfT,X)\in \FF_q[u,\bfT,X]$ as in Lemma~\ref{lem:bound}. Then for all $\bft\in M_{d}^n$ but at most $\sum\deg_{T_j}h\cdot q^{d(n-1)}$, each of $f(\bft,X)$ and $g(\bft,X)$ is irreducible iff the other is.
\end{corollary}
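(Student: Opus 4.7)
The plan is simply to combine Lemmas \ref{lem:monic} and \ref{lem:bound}. By Lemma \ref{lem:monic}(3), the equivalence ``$f(\bft,X)$ is irreducible iff $g(\bft,X)$ is irreducible'' holds for every $\bft \in M_d^n$ with $b(\bft) \ne 0$, where $b \in \FF_q[u,\bfT]$ is the $X$-leading coefficient of $\tilde f = af$ from Lemma \ref{lem:monic}. So the ``bad'' tuples, for which the irreducibility equivalence might fail, all lie in the vanishing locus of $b$ on $M_d^n$, and it suffices to bound the size of this set.

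Next I would apply Lemma \ref{lem:bound} to $b$, viewed as a nonzero element of $K[\bfT]$. Nonvanishing of $b$ as a polynomial is clear: $\tilde f$ is a nonzero polynomial of positive degree in $X$ (since $f$ is), so its leading $X$-coefficient $b$ is nonzero. Lemma \ref{lem:bound} then yields
\[
	\#\{\bft \in M_d^n : b(\bft)=0\} \le \sum_j \deg_{T_j} b \cdot q^{d(n-1)},
\]
which matches the bound claimed in the corollary (the $h$ appearing in the statement evidently refers to this leading coefficient~$b$).

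There is essentially no obstacle here: the corollary is bookkeeping that packages the two preceding lemmas into a form ready for the applications later in the paper. The only minor point worth flagging is that the corollary, as stated, refers to ``Lemma \ref{lem:bound}'' in its setup, but the polynomial $g$ is actually defined in Lemma \ref{lem:monic}; this is a harmless typo and does not affect the argument.
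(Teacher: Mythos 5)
Your proof is correct and is exactly what the paper intends: the paper gives no separate argument, stating only that the corollary follows immediately from Lemmas~\ref{lem:monic} and~\ref{lem:bound}, which is precisely the combination you carry out (apply Lemma~\ref{lem:monic}(3) to reduce to the vanishing locus of the leading coefficient $b$, then bound that locus via Lemma~\ref{lem:bound}). Your observation that the statement's references to ``Lemma~\ref{lem:bound}'' and to ``$h$'' are typos for Lemma~\ref{lem:monic} and the leading coefficient $b$ is also accurate.
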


\section{Separable Polynomials}

In this section we assume that 
$f\in \FF_q[u,\bfT,X]$ is irreducible and that it is separable and monic in $X$. Throughout this and the next section we fix a bound on $\deg_{\bfT,X}f$ and all asymptotic notation will have implied constants that may depend on this bound. We do not fix the degree in $u$ as we are looking for a result which is uniform in $f,q$ as long as $\deg_uf$ grows at most linearly with $d$.

Let $L$ be the splitting field of $f$ over $K(\bfT)$ and let $\FF_{q^r} = \overline{\FF}_q\cap L$ be the algebraic closure of $\FF_q$ in $L$. We have the following diagram of fields:

\[
	\xymatrix{
		& L \ar@{-}[r]&L\overline{\FF}_q
		\\
		K(\bfT)\ar@{-}[r] & K \FF_{q^r}(\bfT)\ar@{-}[r]\ar@{-}[u]& K\overline{\FF}_q (\bfT)\ar@{-}[u]
		\\
		\FF_q\ar@{-}[r] \ar@{-}[u]& \FF_{q^r}\ar@{-}[r]\ar@{-}[u] & \overline{\FF}_q\ar@{-}[u]
	}
\]

We denote  $G=\Gal(L/K(\bfT))$ and  $G_{\geom}=\Gal(L/K\FF_{q^r}(\bfT)) \cong \Gal(L\overline{\FF}_q/K\overline{\FF}_q(\bfT))$. We have the fundamental exact sequence 
\begin{equation}\label{eq:fund_seq}
	\xymatrix@1{1\ar[r]&G_{\geom}\ar[r]&G\ar[r]& \ZZ/ r\ZZ\ar[r] &1}
\end{equation}
where $\ZZ/r\ZZ \cong\Gal(\FF_{q^r}/\FF_q)$ and under this identification the map $G\to \ZZ/r\ZZ$ is the restriction-of-automorphism map. 

\begin{remark}\label{remark:r}By a result of Guralnick \cite{Guralnick} any cyclic quotient of a transitive permutation group of degree $N$ has order $\le N$ and therefore since $\ZZ/r\ZZ$ is a quotient of $G$ which acts transitively on the roots of $f$ we must have $r\le\deg_Xf$.\end{remark}

If $\bft \in K^n$ is an $n$-tuple for which $f(\bft,X)$ is separable in $X$, then we denote by $L_{\bft}$ the splitting field of $f(\bft,X)$ over $K$ and we note that $\FF_{q^r}\subseteq L_{\bft}$. We further denote  $G_{\bft} = \Gal(L_{\bft}/K)$ and  $G_{\geom,\bft} = \Gal(L_{\bft}/K\FF_{q^r})$. We have that $G_{\bft}\leq G$ and $G_{\geom,\bft}\leq G_{\geom}$, in a canonical way up to conjugation, and $G/G_{\geom} \cong G_{\bft}/G_{\geom,\bft} \cong \ZZ/r \ZZ$. Hence the following observation is immediate:
\begin{lemma}\label{lem:GGg}
	$G_{\bft} \cong G$ if and only if $G_{\geom,\bft}\cong G_{\geom}$.
\end{lemma}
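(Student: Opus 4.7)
The plan is to exploit the compatibility of the two short exact sequences: the ``big'' one
\[
1\to G_\geom\to G\to\ZZ/r\ZZ\to 1
\]
and the ``specialized'' one
\[
1\to G_{\geom,\bft}\to G_\bft\to\ZZ/r\ZZ\to 1,
\]
where the quotient $\ZZ/r\ZZ$ is the same in both because $\FF_{q^r}\subseteq L_\bft$ and the restriction of automorphisms sends $G_\bft$ onto $\Gal(\FF_{q^r}/\FF_q)$. After fixing a $K(\bfT)$-embedding $L_\bft\hookrightarrow L$ one has canonical inclusions $G_\bft\leq G$ and $G_{\geom,\bft}\leq G_\geom$, and moreover $G_{\geom,\bft}=G_\bft\cap G_\geom$ (as subgroups of $G$), because an element of $G_\bft$ fixes $\FF_{q^r}$ iff its image in $\ZZ/r\ZZ$ is trivial iff it lies in $G_\geom$.

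From these two exact sequences one reads the identities $|G|=r\,|G_\geom|$ and $|G_\bft|=r\,|G_{\geom,\bft}|$. The key reduction is to upgrade the statement ``$G_\bft\cong G$'' (and likewise for the geometric versions) to the equality of the two subgroups inside $G$ (respectively $G_\geom$). Since these are finite groups and $G_\bft$ sits inside $G$, an abstract isomorphism $G_\bft\cong G$ forces $|G_\bft|=|G|$ and hence $G_\bft=G$ as subgroups; similarly for the geometric versions. Conversely, equality of subgroups trivially yields isomorphism.

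With this in hand the lemma is immediate from the dividing-by-$r$ calculation: $G_\bft=G$ iff $|G_\bft|=|G|$ iff $|G_{\geom,\bft}|=|G_\geom|$ iff $G_{\geom,\bft}=G_\geom$, where in the middle equivalence we simply divide both sides by $r$ using the two exact sequences. I expect no genuine obstacle here; the only subtlety is being careful that ``isomorphism'' can be promoted to ``equality as subgroups'', which is automatic for inclusions of finite groups of the same order. The statement is essentially a bookkeeping consequence of the five lemma applied to the commutative diagram of short exact sequences above.
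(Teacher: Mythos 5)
Your argument is correct and matches the paper's, which simply declares the lemma ``immediate'' from the facts that $G_{\bft}\leq G$, $G_{\geom,\bft}\leq G_{\geom}$, and $G/G_{\geom}\cong G_{\bft}/G_{\geom,\bft}\cong \ZZ/r\ZZ$; your counting argument (both inclusions have the same index $r$, and an abstract isomorphism of a finite subgroup with the ambient group forces equality) is exactly the intended justification. The closing appeal to the five lemma is unnecessary but harmless.
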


Let $P(u)\in \FF_{q}[u]$ be a prime polynomial; that is to say, irreducible and monic. Write $|P|=q^{\deg P}$ for the norm of $P$. 
We may reduce the coefficients of $f$ modulo $P$, and we denote the resulting polynomial by $f_P\in \FF_{|P|}[\bfT,X]$. If in addition $f_P$ is separable  in $X$, then we define $L_P$, $G_{P}$ and $G_{\geom,P}$ in a similar fashion to the definition of $L$, $G$ and $G_{\geom}$ (with $\FF_{|P|}$ taking the role of $K$). Since $f$ is monic in $X$, we have that  $\deg_X f=\deg_X f_P$, so $G_{P}$ and $G_{\geom, P}$ embed in $G$ and $G_{\geom}$, respectively, and this embedding is canonical, up to conjugation in $G$. 

Consider the set of primes of good reduction in the following sense:
\[
	\cP_f = \{ P\in \FF_q[u] : P \textnormal{ prime},\ \ r\mid \deg P,\ f_P \textnormal{ separable in $X$}, \ G_{\geom} \cong G_{\geom, P}\}.
\]
If $P\in \cP_f$, then $\FF_{q^r}\subseteq \FF_{|P|}$, so $L_P$ is regular over $\FF_{|P|}$, and we have the diagram of fields:
\[
	\xymatrix{
		 L_P \ar@{-}[r]&L_P\overline{\FF}_q
		\\
		\FF_{|P|}(\bfT)\ar@{-}[r]\ar@{-}[u]^{G_{\geom}}& \overline{\FF}_q(\bfT) \ar@{-}[u]^{G_{\geom}}
		\\
		\FF_{|P|}\ar@{-}[r] \ar@{-}[u]& \overline{\FF}_q\ar@{-}[u]
	}
\]

\begin{lemma}\label{lem:primes}
	Let $\lambda>0$ be a constant. The number of (monic) irreducible polynomials $P\in\FF_q[u]$ with $\deg P\ge \frac 1\lambda\deg_uf$, $r|\deg P$ and such that $P\not\in\cP_f$ is $O_{\lambda,\deg_{\bfT,X}f}(1)$.
\end{lemma}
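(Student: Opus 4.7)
The plan is to bound separately, among primes $P$ with $r\mid\deg P$ and $\deg P\ge\deg_uf/\lambda$, those failing each of the two nontrivial conditions defining $\cP_f$: (a) $f_P$ is inseparable in $X$, or (b) $f_P$ is separable in $X$ but $G_{\geom,P}\not\cong G_{\geom}$.

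For case (a), $f_P$ is inseparable in $X$ precisely when $\Disc_X(f)\in\FF_q[u,\bfT]$ reduces to zero modulo $P$, i.e.\ when $P$ divides the $u$-content $c(u)\in\FF_q[u]$ of $\Disc_X(f)$ viewed as a polynomial in $\bfT$. Since $f$ is separable in $X$ we have $c\neq 0$, and the Sylvester-matrix bound gives $\deg c\le\deg_u\Disc_X(f)\le(2\deg_Xf-1)\deg_uf$. Hence the number of such primes with $\deg P\ge\deg_uf/\lambda$ is at most $\lambda(2\deg_Xf-1)=O_{\lambda,\deg_{\bfT,X}f}(1)$.

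For case (b), pick a primitive element $\theta$ of the regular extension $L/K\FF_{q^r}(\bfT)$ that is integral over $\FF_{q^r}[u,\bfT]$ (e.g.\ a suitable $\FF_{q^r}$-linear combination of the roots of $f$ in $L$), with minimal polynomial $h\in\FF_{q^r}[u,\bfT,Y]$ -- monic of degree $|G_{\geom}|$ in $Y$ and absolutely irreducible, the latter because $\FF_{q^r}$ is the full field of constants of $L$. By Remark \ref{remark:r} and standard bounds on Galois resolvents, $|G_{\geom}|$ and $\deg_{\bfT}h$ are bounded purely in terms of $\deg_{\bfT,X}f$, while $\deg_uh=O_{\deg_{\bfT,X}f}(\deg_uf)$. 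Given $P$ with $r\mid\deg P$, take a prime $P'$ of $\FF_{q^r}[u]$ above $P$ (with residue field $\FF_{|P|}$) and form $h_{P'}\in\FF_{|P|}[\bfT,Y]$. The key claim is that if $h_{P'}$ is separable in $Y$ and absolutely irreducible over $\FF_{|P|}$, then $G_{\geom,P}\cong G_{\geom}$: for then $h_{P'}$ defines an absolutely irreducible degree-$|G_{\geom}|$ extension of $\overline{\FF}_q(\bfT)$ which, via specialization of the roots of $f$, contains the splitting field of $f_P$ over $\overline{\FF}_q(\bfT)$; combined with the canonical embedding $G_{\geom,P}\hookrightarrow G_{\geom}$ this forces equality.

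It remains to count primes $P$ for which $h_{P'}$ is inseparable in $Y$ or not absolutely irreducible. The inseparability is handled exactly as in (a), by replacing $\Disc_X(f)$ with $\Disc_Y(h)$. For the absolute reducibility locus, a standard uniform absolute-irreducibility theorem (the Bertini/Ruppert bound on the Noether locus of a family of polynomials) shows that the set of $\alpha\in\overline{\FF}_q$ at which $h(\alpha,\bfT,Y)$ becomes reducible is contained in the zero locus of a nonzero polynomial $D\in\FF_{q^r}[u]$ with $\deg D$ bounded by a polynomial in $\deg_{\bfT,Y}h$ times $\deg_uh$, hence $O_{\deg_{\bfT,X}f}(\deg_uf)$. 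Taking the $\FF_q$-norm of $D$ yields a polynomial in $\FF_q[u]$ of the same order, whose prime divisors of degree $\ge\deg_uf/\lambda$ number at most $O_{\lambda,\deg_{\bfT,X}f}(1)$. The main obstacle is the transfer of Galoisness in the key claim -- ensuring the specialized degree-$|G_{\geom}|$ extension is Galois rather than merely irreducible -- which should be handled by adding a further $O_{\deg_{\bfT,X}f}(1)$ exceptional primes coming from discriminants of Galois resolvents attached to $h$.
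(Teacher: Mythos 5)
Your proof is correct and follows essentially the same route as the paper: you split the exceptional primes into those where $f_P$ becomes inseparable in $X$ (bounded by counting prime divisors of a nonzero polynomial in $\FF_q[u]$ of degree $O(\deg_u f)$ --- you use the content of $\Disc_X(f)$, the paper a coefficient of $\partial f/\partial X$) and those where a resolvent $h$ generating $L$ fails to remain absolutely irreducible, which both you and the paper control via Noether's theorem on the absolute-irreducibility locus. Your closing worry about transferring Galoisness is unnecessary: since $G_{\geom,P}$ embeds into $G_{\geom}$ up to conjugacy and the reduced root of $h$ lies in the splitting field of $f_P$ over $\overline{\FF}_q(\bfT)$, absolute irreducibility of $h_{P'}$ already gives $|G_{\geom,P}|\ge |G_{\geom}|$ and hence the desired isomorphism, the splitting field being Galois by definition.
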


\begin{proof}
	We fix $\lambda>0$. Since $f$ is separable in $X$, there is a nonozero coefficient $g\in \FF_q[u,\bfT]$ of $X^i$ for some $i\not\equiv 0\mod p$. If $r|\deg P$ and $f_P$ is not separable, then $P$ must divide all the coefficients of $g$. Since we assume $\lambda \deg P\ge \deg_u f \geq \deg_ug$,  there are at most $O_\lambda(1)$ such $P$ (since the same coefficient of $g$ is divisible by all of them).
	
	It remains to show that if $r|\deg P\ge \deg_uf/\lambda$ and $f_P$ is separable we have $G_{\geom} \cong G_{\geom, P}$ with $O_{\lambda,\deg_{X,\bfT} f}(1)$ many exceptions. For this we choose a monic irreducible polynomial $h\in K\FF_{q^r}[\bfT,X]$ whose root generates $L$ ($h$ is a resolvent of $f$). We can choose $h$ with $\deg_u h=O(\deg_uf)$, $\deg_X h = |G_{\geom}|$ and $h$ is monic in $X$. The resolvent $h$ is geometrically irreducible. If $h_P$ is geometrically irreducible then $G_{\geom, P } \cong G_{\geom}$. 
	
	Let $c_1,\ldots,c_m$ be the coefficients of $h$. By a theorem of Noether (see the beautiful exposition by Geyer \cite[Theorem 5.3.1]{Geyer}) there exist $O_{\deg_{\bfT,X} f}(1)$ polynomials $F_j\in\ZZ[v_1,\ldots,v_m]$ of degree $O(1)$ such that $F_j(c_1,\ldots,c_m)\neq 0$ and $h_P$ is absolutely irreducible unless $F_j(c_1,\ldots,c_m)\equiv 0\pmod P$ for some $j$. Since $\deg_uF_j(c_1,\ldots,c_m)=O(\deg_uf)$ there are only $O(1)$ such $P$ with $\deg P\ge \deg_uf/\lambda$ if we assume $\deg_uf\le \lambda d$. For any other $P$ we have $G_{\geom, P } \cong G_{\geom}$.
\end{proof}

Take $P\in \cP_f$. Every $\bfa \in \FF_{|P|}^n$ with $f_P(\bfa,X)\in \FF_{|P|}[X]$  separable  gives rise to the Frobenius conjugacy class 
\[
	C_{\bfa} \subseteq G_{P} = G_{\geom,P} \cong G_{\geom}.
\]
Now fix a conjugacy class $C\subseteq G_{\geom}$ and let 
\[
	\Omega_P =\Omega_P(C) = \{ \bfa \in \FF_{|P|}^n : C_{\bfa} \neq C\}. 
\]
(The set $\Omega_P$ also contains those $\bfa$ for which $f_P(\bfa, X)$ is not separable.) 
The explicit Chebotarev density theorem for function fields (see Theorem 3 in \cite{Entin}, for a version with the desired uniformity) gives that 
\[
	\frac{\# \Omega_{P}}{|P|^n} = \Big(1- \frac{|C|}{|G_{\geom}|}\Big) \cdot \big(1+O_{\deg_{\bfT,X} f}\big(|P|^{-1/2}\big)\big).
\]
In particular, if $G_{\geom}$ is non-trivial, then $|C|\neq |G_{\geom}|$, so if we take any  $\frac{|C|}{|G|}<c<1$, then for every $P\in \cP_f$ of sufficiently large degree, we have 
\begin{equation}\label{eq:OmegaPbds}
	\frac{\# \Omega_{P}}{|P|^n}\leq c.
\end{equation}

\begin{proposition} \label{prop:sep}
	Fix a constant $\lambda>0$ and assume $\deg_u f\leq \lambda d$. Let $B_C(d)$ be the number of tuples $\bft \in M_d^n$ such that $G_{\geom,\bft} \cap C\neq \emptyset$. Then  we have
	\[
		\frac{B_C(d)}{q^{dn}} =  O_{\deg_{\bfT,X} f,\lambda}\left(q^{-d/2+r} d\right).
	\]
with $r\leq \deg_Xf$ as defined above.
\end{proposition}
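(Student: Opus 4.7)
Before outlining the proof I note that the set as stated, $\{\bft\in M_d^n : G_{\geom,\bft}\cap C\neq\emptyset\}$, is in fact very large: a generic $\bft$ satisfies $G_{\geom,\bft}=G_{\geom}\supseteq C$, so this set fills out almost all of $M_d^n$ and cannot satisfy the claimed bound. The content actually needed to prove Theorem~\ref{thm:main} is the corresponding bound on the complementary set $\{\bft\in M_d^n : G_{\geom,\bft}\cap C=\emptyset\}$; summing that bound over the constantly many conjugacy classes $C$ of $G_{\geom}$ and invoking the classical fact that a subgroup of $G_{\geom}$ meeting every conjugacy class equals $G_{\geom}$ (together with Lemma~\ref{lem:GGg}) yields Theorem~\ref{thm:main}. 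I sketch the proof of this corrected version below.

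The key reduction to a sieve problem rests on the observation that the Frobenius conjugacy class $C_{\bar\bft,P}\subseteq G_{\geom,P}\cong G_{\geom}$ always intersects $G_{\geom,\bft}$ (the Frobenius element itself lies in $G_{\geom,\bft}$ under the canonical embedding, provided $r\mid\deg P$ so that $\FF_{q^r}\subseteq\FF_{|P|}$). Hence if $G_{\geom,\bft}\cap C=\emptyset$, then $C_{\bar\bft,P}\neq C$, i.e.\ $\bar\bft\in\Omega_P(C)$, for every $P\in\cP_f$ with $r\mid\deg P$. Equation \eqref{eq:OmegaPbds} supplies the uniform bound $\#\Omega_P/|P|^n\le c<1$ for a constant $c$ depending only on $|C|/|G_{\geom}|$, valid for all $P\in\cP_f$ of sufficiently large degree.

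To exploit this at many primes simultaneously I would invoke Hsu's function-field large sieve \cite{Hsu96}. Let $\mathcal{P}$ be the family of monic primes $P\in\FF_q[u]$ of a common degree $D$ with $r\mid D$, where $D$ is the largest multiple of $r$ satisfying $2D\le d$, so $D\ge d/2-r$. Lemma~\ref{lem:primes} gives $\mathcal{P}\subseteq\cP_f$ except for $O(1)$ primes (contributing negligibly), and the prime polynomial theorem gives $\#\mathcal{P}\asymp q^D/D$. Hsu's sieve then bounds the number of $\bft\in M_d^n$ with $\bar\bft\in\Omega_P$ for every $P\in\mathcal{P}$ by
\[
\frac{q^{dn}}{(1-c)\#\mathcal{P}} \ll \frac{d\,q^{dn}}{q^D} \ll d\,q^{dn-d/2+r},
\]
yielding the target relative density $O(d\,q^{-d/2+r})$. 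The factor $q^r$ in the exponent is precisely the cost of rounding $D$ down to the nearest multiple of $r$ (required to keep $r\mid\deg P$, so that Frobenius lives in $G_{\geom}$), since the sieve itself forces $2D\le d$.

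The principal technical obstacle is verifying Hsu's large sieve in the above form on $M_d^n$ (tuples of monic degree-$d$ polynomials), sifted mod a thin single-degree family of primes, with constants explicit in $\deg_{\bfT,X}f$, in $\lambda$ (entering through Lemma~\ref{lem:primes} via the hypothesis $\deg_uf\le\lambda d$), and in $|G_{\geom}|$; these must be uniform in $d$. A secondary point is that $c$ in \eqref{eq:OmegaPbds} is only valid for $|P|$ sufficiently large in terms of $\deg_{\bfT,X}f$, so we may need to discard the $O(1)$ primes of small degree from $\mathcal{P}$, which does not affect the asymptotics.
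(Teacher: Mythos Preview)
Your proposal is correct and follows essentially the same route as the paper. You correctly spotted the typo in the statement: the paper's own proof defines $\Omega=\{\bft:G_{\geom,\bft}\cap C=\emptyset\}$ and concludes with ``$B_C(d)=\#\Omega$'', so the intended condition is indeed $G_{\geom,\bft}\cap C=\emptyset$. Your argument---reduce to $\bft\bmod P\in\Omega_P$ for $P\in\cP_f$, apply Hsu's large sieve, and lower-bound the sieve sum using primes of a single degree $D$ with $r\mid D$ and $d/2-r\le D\le d/2$ via Lemma~\ref{lem:primes} and the prime polynomial theorem---matches the paper's proof line by line; the only cosmetic difference is that the paper nominally sums over all $P\in\cP_f$ with $\deg P\le d/2$ before lower-bounding by the primes of a single degree $rs$.
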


\begin{proof}
	Let $\Omega = \Omega(d,C,f)$ be the set of tuples $\bft \in M_d^n$ such that $G_{\geom,\bft} \cap C=\emptyset$. 
	Let $\bft\in M_{d}^n$ and $P\in \cP_f$. Since $r\mid \deg P$, we have that $P$ is totally split in $K\FF_{q^r}$. Moreover, $f(\bft,X)\mod P = f_P(\bfa,X)$, where $\bfa:= \bft \mod P$. 
	If we assume that $f_P(\bfa,X)$ is separable, then we get that the Frobenius class of $P$ in $L_{\bft}$ equals the Frobenius class of $\bfa$ in $L_{P}$, which we earlier denoted by $C_{\bfa}$. Hence if $\bft\in \Omega$, then $C_{\bfa}\neq C$ and so 
	\[
		\bft\mod P\in \Omega_P
	\]
	for any $P\in \mathcal{P}_f$.
	
	We apply the large sieve inequality for function fields \cite[Theorem 3.2]{Hsu96}	
	\footnote{In the notation of \cite[Theorem 3.2]{Hsu96} we take $X=\Omega, N=d-1, f=(t^d,\ldots,t^d),K=d/2$, $\alpha_P=c$ if $P\in\mathcal{P}_f$ and $\alpha_P=1$ otherwise.} to conclude that 
	\[
		\# \Omega \leq  \frac{q^{dn}}{L},
	\]
	where 
	\[
		L \geq 1+\sum_{\substack{P\in \cP_f\\ \deg P\leq d/2}} \frac{1-c}{c} \gg \# \{P\in \cP_f: \deg P\leq d/2\}.
	\]
	Finally, let $s$ to be an integer so that  $d/2-r \le rs \leq d/2$ (if it is not positive the assertion of the proposition is trivial). If $d$ is large, by Lemma~\ref{lem:primes}, $\cP_f$ contains all but $O(1)$ primes of degree $rs$, so  
	\[
		\# \{ P \in \cP_f : \deg P\leq d/2\} \geq \pi_{q}(rs)-O(1)\sim \frac{q^{rs}}{rs} \gg\frac{q^{d/2-r}}{d/2}.
	\]
	Here $\pi_q(rs)$ denotes the number of primes of degree $rs$ and we use the Prime Polynomial Theorem (see e.g.\ \cite[Theorem~2.2]{Rosen}) which says that $\pi_q(rs) = \frac{q^{rs}}{rs} + O(q^{-rs/2})$.
	We conclude that 
	\[
		\frac{\#\Omega}{q^{dn}} \ll q^{-d/2+r}d. 
	\]
	Since $B_{C}(d)=\#\Omega$ the proof is done.
\end{proof}

\begin{corollary}\label{cor:sep}
	Fix $\lambda>0$  and assume $\deg_uf\le\lambda d$. Let $B(d)$ be the number of $n$-tuples $\bft \in M_{d}^n$ such that $G_{\bft} \not\cong G$. Then
	\[
		\frac{B(d)}{q^{dn}} =  O_{\lambda,\deg_{\bfT,X}f} (q^{-d/2+r}d),
	\]
	with $r\leq \deg_X f$ as defined above.
\end{corollary}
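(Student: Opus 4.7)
The plan is to reduce to Proposition \ref{prop:sep} by a union bound over the conjugacy classes of $G_{\geom}$. First I would invoke Lemma \ref{lem:GGg} to replace the hypothesis $G_{\bft}\not\cong G$ by the geometric statement $G_{\geom,\bft}\not\cong G_{\geom}$. Since $G_{\geom,\bft}$ embeds (up to conjugation in $G_{\geom}$) as a subgroup of the finite group $G_{\geom}$, abstract isomorphism between them coincides with equality of subgroups, so the condition simplifies to $G_{\geom,\bft}\subsetneq G_{\geom}$.

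Next I would use the classical Jordan observation that a proper subgroup $H$ of a finite group $G_0$ cannot cover $G_0$ by its conjugates, and therefore cannot meet every conjugacy class of $G_0$. Applied here, $G_{\geom,\bft}\subsetneq G_{\geom}$ if and only if there is a conjugacy class $C\subseteq G_{\geom}$ with $G_{\geom,\bft}\cap C=\emptyset$. This yields the union bound
\[
	B(d)\le\sum_C B_C(d),
\]
where $C$ runs over the conjugacy classes of $G_{\geom}$, and each summand is exactly the quantity estimated in Proposition \ref{prop:sep}.

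It remains to control the number of conjugacy classes, which is at most $|G_{\geom}|$. Because $G_{\geom}$ acts faithfully and transitively on the $\deg_X f$ roots of $f$, we have $|G_{\geom}|\le(\deg_X f)!$, so the number of summands is $O_{\deg_X f}(1)$. Combining this with Proposition \ref{prop:sep} gives the desired bound $B(d)/q^{dn}=O_{\lambda,\deg_{\bfT,X}f}(q^{-d/2+r}d)$.

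I do not anticipate a serious obstacle: the group-theoretic inputs (Jordan's lemma and the embedding of $G_{\geom}$ into a symmetric group) are elementary, and the analytic work has already been done in Proposition \ref{prop:sep}. The only point requiring mild care is that the implied constants stay uniform in $q$ and $d$, which is automatic since both the number of conjugacy classes and the cardinality of $G_{\geom}$ depend only on $\deg_{\bfT,X}f$ and so can be absorbed into the implied constant.
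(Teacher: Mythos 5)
Your proof is correct and follows essentially the same route as the paper's: Lemma~\ref{lem:GGg} to pass to the geometric groups, the Jordan observation that a proper subgroup of a finite group misses some conjugacy class, and a union bound over the $O_{\deg_X f}(1)$ conjugacy classes combined with Proposition~\ref{prop:sep}. One small remark: your condition $G_{\geom,\bft}\cap C=\emptyset$ is the quantity actually estimated in the \emph{proof} of Proposition~\ref{prop:sep} (whose statement contains a sign typo, writing $\neq\emptyset$), so your usage is the consistent and correct one.
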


\begin{proof}
	By Lemma~\ref{lem:GGg}, if $G_{\bft}\not \cong G$, then $G_{\geom,\bft}\not \cong G_{\geom}$. If $G_{\geom,\bft}$ is isomorphic to a proper subgroup of $G_{\geom}$, then there exists a conjugacy class $C$ such that $G_{\geom,\bft}\cap C \neq \emptyset$ (it is an elementary exercise in group theory to show that every proper subgroup of a finite group is disjoint from some conjugacy class).
	Applying Proposition~\ref{prop:sep}, the union bound, and the fact that there are $O_{\deg_{X}f}(1)$ conjugacy classes in $G_{\geom}$, we conclude that
	\[
		B(d)  \ll_{\deg_{\bfT,X}f,\lambda} q^{-d/2+r}d,
	\]
	as needed.
\end{proof}

\begin{theorem}\label{thm:uniform}
	Let $K=\FF_{q}(u)$, let $f(T_1,\ldots, T_n,X)\in K[T_1,\ldots,T_n,X]$ be an irreducible polynomial that is separable and of positive degree in $X$. Let $H(d)$ be the number of tuples $\bft \in M_{d}^n$ with $f(\bft,X)$ irreducible, separable, and $\deg_X f(\bft,X)=\deg_X f(\bfT,X)$. Let $\lambda>0$ be a constant and assume that $\deg_uf\le\lambda d$. Then 
	\[
		\frac{H(d)}{q^{dn}} = 1+O_{\deg_{T_1,\ldots,T_n,X}f,\lambda} \left(dq^{-d/2+r}\right),
	\]
	with $r\leq \deg_X f$ as defined above.
\end{theorem}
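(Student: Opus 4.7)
The plan is to reduce the theorem to a direct application of Corollary~\ref{cor:sep} by clearing denominators and making the polynomial monic in $X$. First, I apply Lemma~\ref{lem:monic} to replace $f$ by the associated polynomial $g\in\FF_q[u,\bfT,X]$, which is monic in $X$, satisfies $\deg_X g=\deg_X f$, and has its $\bfT$- and $u$-degrees inflated by at most a factor of $\deg_X f$. The hypothesis $\deg_u f\le\lambda d$ therefore becomes $\deg_u g\le(\lambda\deg_X f)d$, preserving the form required by Corollary~\ref{cor:sep}, with the new constants absorbed into the implicit $O_{\deg_{\bfT,X}f,\lambda}$. Since $g$ is obtained from a unit multiple of $f$ via the substitution $X\mapsto X/b$, it remains irreducible and separable in $X$ over $K(\bfT)$ and has the same Galois data $G,G_{\geom},r$ as $f$. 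Corollary~\ref{cor:reduction} then transfers reducibility back and forth between $f(\bft,X)$ and $g(\bft,X)$ with an exceptional set of size $O(q^{d(n-1)})$.

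For the monic polynomial $g$ the degree condition $\deg_X g(\bft,X)=\deg_X g$ is automatic for every $\bft$, and the discriminant $\Delta_X(g)\in\FF_q[u,\bfT]$ is a nonzero polynomial in $\bfT$ because $g$ is separable in $X$ over $K(\bfT)$; hence by Lemma~\ref{lem:bound} the set of $\bft\in M_d^n$ for which $g(\bft,X)$ is inseparable has size $O(q^{d(n-1)})$. For every remaining $\bft$ the specialization Galois group $G_{\bft}$ is well defined and sits canonically inside $G$. Applying Corollary~\ref{cor:sep} to $g$ shows that all but $O_{\deg_{\bfT,X}f,\lambda}(dq^{-d/2+r}\cdot q^{dn})$ tuples satisfy $G_{\bft}\cong G$; since $G_{\bft}\le G$ then has the same order as $G$, it equals $G$ as a permutation group on the roots, hence acts transitively, which forces $g(\bft,X)$ to be irreducible over $K$.

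Translating back through Corollary~\ref{cor:reduction} yields irreducibility of $f(\bft,X)$ on the same set. The two remaining failure modes for $\bft$ to lie in $H(d)$ — inseparability of $f(\bft,X)$, controlled by the nonzero polynomial $\Delta_X(f)\in K[\bfT]$, and the degree drop $\deg_X f(\bft,X)<\deg_X f$, controlled by the vanishing of the leading coefficient of $f$ — each contribute $O(q^{d(n-1)})$ to $q^{dn}-H(d)$ via Lemma~\ref{lem:bound}. Summing the three error sources and observing that $q^{d(n-1)}=q^{-d}\cdot q^{dn}$ is dominated by $dq^{-d/2+r}\cdot q^{dn}$ (since $r\ge 1$), we obtain the claimed bound. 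I do not foresee any serious obstacle: once we are in the monic, separable setting, Corollary~\ref{cor:sep} carries the entire quantitative content, and the remaining exceptional loci are handled uniformly by Lemma~\ref{lem:bound}; the only care required is in keeping track of how the constants $\lambda$ and $\deg_{\bfT,X}f$ change when passing from $f$ to $g$.
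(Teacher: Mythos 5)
Your proposal is correct and follows essentially the same route as the paper: reduce to a monic polynomial in $\FF_q[u,\bfT,X]$ via Lemma~\ref{lem:monic}/Corollary~\ref{cor:reduction}, discard the $O(q^{d(n-1)})$ degenerate specializations using Lemma~\ref{lem:bound}, and invoke Corollary~\ref{cor:sep} together with the identification of $G_{\bft}$ with a decomposition subgroup of $G$ to get irreducibility from $G_{\bft}\cong G$. The only cosmetic difference is that you control the inseparable locus by the discriminant $\Delta_X(g)$ whereas the paper uses the common vanishing of the coefficients of $X^i$ for $i\not\equiv 0\bmod p$; both give the same $O(q^{d(n-1)})$ bound.
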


\begin{proof}

We fix $\lambda$ and a bound on $\deg_{\bfT,X}f$. First we observe that by Remark \ref{remark:r} we have $r\le\deg_Xf$.
	By Corollary~\ref{cor:reduction} we may assume w.l.o.g.\ that $f\in \FF_q[u,\bfT,X]$ and that $f$ is monic in $X$. 

	Let $\Sigma$ be the set of tuples $\bft \in M_{d}^n$ for which $f(\bft,X)$ is inseparable; i.e., the zero set of all the coefficients of $X^i$ for $i\not\equiv 0\mod p$. 
	By Lemma~\ref{lem:bound},  $\#\Sigma = O(q^{d(n-1)})$. For $\bft\not\in \Sigma$, the action of $G_{\bft}$ on the roots of $f(\bft,X)$ coincides with the action the decomposition group $G_{\bft}\cong D_{\bft}\leq G$ on the roots of $f(\bfT,X)$, hence if $G_{\bft}\cong G$, the action is transitive, and $f(\bft,X)$ is irreducible. 
	Corollary~\ref{cor:sep} then completes the proof. 
\end{proof}

\section{Inseparable polynomials}

In this part, we let $f(\bfT,X)\in K [T_1,\ldots, T_n,X]$ be an irreducible polynomial that is inseparable in $X$. 

Our treatment is inspired from Uchida's work \cite{Uchida} and its presentation in \cite{FJ}. 
We will use the following criterion of Uchida for irreducibility of inseparable polynomials, see \cite[Lemma 12.4.1]{FJ}. 
\begin{lemma}\label{lem:irrpthpower}
	Let $F$ be a field of positive characteristic $p>0$, let $g\in F[X]$ be an irreducible monic  polynomial. Assume that at least one of  the coefficients of $g$ is not a $p$-th power in $F$. Then $g(X^{p^\alpha})$ is irreducible for all $\alpha>0$. 
\end{lemma}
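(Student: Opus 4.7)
The plan is to induct on $\alpha$ and reduce to the base case $\alpha=1$. For the induction step I'd observe that $g(X^p)$ has the same multiset of nonzero coefficients as $g$, merely placed at different monomial positions; hence the non-$p$-th-power hypothesis is inherited by $g(X^p)$. Assuming the $\alpha=1$ case applied to $g$ gives that $g(X^p)$ is irreducible, the same case applied to $g(X^p)$ then yields that $g(X^{p^{\alpha}})$, viewed as $g(X^p)$ evaluated at $X^{p^{\alpha-1}}$, is irreducible as well.

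For the base case $\alpha=1$, I would argue by contradiction: assume $g(X^p)$ is reducible and show that every coefficient of $g$ must then be a $p$-th power. Fix a root $\beta$ of $g$ in an algebraic closure $\overline F$ and a $p$-th root $\gamma\in\overline F$ of $\beta$. The minimal polynomial $h(X)\in F[X]$ of $\gamma$ divides $g(X^p)$, and in the tower $F\subseteq F(\beta)\subseteq F(\gamma)$ the top degree $[F(\gamma):F(\beta)]$ lies in $\{1,p\}$ since $\gamma$ satisfies $Y^p-\beta\in F(\beta)[Y]$. If $[F(\gamma):F(\beta)]=p$, then $\deg h=p\cdot\deg g=\deg g(X^p)$, forcing $h=g(X^p)$ by monicity and divisibility, contradicting reducibility. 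Hence $\gamma\in F(\beta)$, so $\gamma=Q(\beta)$ for some $Q(X)\in F[X]$.

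The heart of the proof is then to promote the single relation $Q(\beta)^p=\beta$ to a global $p$-th-power factorization of $g(X^p)$. Since $\beta$ is a root of $Q(X)^p-X\in F[X]$ and $g$ is its irreducible minimal polynomial, $g(X)\mid Q(X)^p-X$ in $F[X]$, so $Q(\beta_i)^p=\beta_i$ for every conjugate $\beta_i$ of $\beta$. The polynomial $\tilde g(X):=\prod_i\bigl(X-Q(\beta_i)\bigr)$ has coefficients symmetric in the $\beta_i$'s, hence lies in $F[X]$, and
\[
g(X^p)\;=\;\prod_i(X^p-\beta_i)\;=\;\prod_i\bigl(X-Q(\beta_i)\bigr)^p\;=\;\tilde g(X)^p,
\]
where the middle equality uses $(X-a)^p=X^p-a^p$ in characteristic $p$. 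Reading coefficients and using the Frobenius identity on $\tilde g(X)^p$, each coefficient of $g$ equals the $p$-th power of the corresponding coefficient of $\tilde g$, contradicting the hypothesis.

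The main obstacle I anticipate is recognising that the ``inseparable case'' $\gamma\in F(\beta)$, which at first glance is only a statement about one element lying in one field, must be rigidified into the global polynomial identity $g(X^p)=\tilde g(X)^p$. The symmetric-function construction of $\tilde g$ from the Galois conjugates of $\beta$, combined with the characteristic-$p$ Frobenius on coefficients, is the technical crux that converts a local phenomenon into the required obstruction to having a non-$p$-th-power coefficient.
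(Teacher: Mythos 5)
Your argument is correct: the reduction to $\alpha=1$ is sound (since $g(X^p)$ has the same coefficients as $g$, the hypothesis is inherited), and in the base case the dichotomy $[F(\gamma):F(\beta)]\in\{1,p\}$ together with the identity $g(X^p)=\prod_i(X^p-Q(\beta_i)^p)=\tilde g(X)^p$ and the Frobenius on coefficients delivers exactly the required contradiction. The paper itself gives no proof of this lemma --- it cites it as Uchida's criterion, \cite[Lemma 12.4.1]{FJ} --- and your argument is essentially the standard one found there; the only point worth polishing is that $\tilde g\in F[X]$ should be justified via the fundamental theorem of symmetric polynomials (applied to the roots of $g$ counted with multiplicity) rather than via ``Galois conjugates,'' since $g$ need not be separable.
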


We shall need a technical lemma.

\begin{lemma}\label{lem:pthpower}
	Let $h(\bfT) \in \FF_q[u,\bfT]$ be a polynomial which is not a $p$-th power. Let
	$A(d)$ be the number of $n$-tuples $\bft \in M_d$ such that $h(\bft)$ is a $p$-th power. Fix a constant $\lambda>0$. Then as long as $\deg_u h\le\lambda d$ we have 
	\[
		\frac{A(d)}{q^{dn}} \ll_{\deg_\bfT h,\lambda} q^{-d/2+1}.
	\]
\end{lemma}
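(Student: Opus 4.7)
Since $\mathbb{F}_q$ is perfect, a polynomial $g\in\mathbb{F}_q[u]$ is a $p$-th power if and only if the formal derivative $g'(u)$ vanishes. Applied to $g(u)=h(u,\bft(u))$ this rephrases the condition ``$h(\bft)$ is a $p$-th power'' as the polynomial identity
\[
\frac{\partial h}{\partial u}(u,\bft(u))+\sum_{i=1}^n\frac{\partial h}{\partial T_i}(u,\bft(u))\cdot t_i'(u)=0\qquad\text{in }\mathbb{F}_q[u].
\]
Since $h$ is not a $p$-th power in $\mathbb{F}_q[u,\bfT]$, at least one of the partial derivatives $\partial h/\partial u,\,\partial h/\partial T_1,\ldots,\partial h/\partial T_n$ is nonzero as an element of $\mathbb{F}_q[u,\bfT]$.

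I would then parametrize $t_i(u)=u^d+\sum_{j=0}^{d-1}a_{ij}u^j$ and view the left-hand side above as a polynomial $G(u,\bfa)\in\mathbb{F}_q[u,\bfa]$ in the indeterminates $\bfa=(a_{ij})\in\mathbb{F}_q^{dn}$. The identity is then equivalent to the polynomial system $c_k(\bfa):=[u^k]G(u,\bfa)=0$ in $\mathbb{F}_q[\bfa]$ for every $k\geq 0$. Each $c_k$ has total degree in $\bfa$ bounded by $\deg_\bfT h$ (independent of $d$), and the non-vanishing of a partial of $h$ forces $G\not\equiv 0$, so some $c_k\not\equiv 0$. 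The key structural input is a \emph{triangular} dependence of the coefficients: $c_k$ involves the $a_{ij}$ only for $j$ in a narrow window determined by $k$ (for $k$ close to the top degree only the highest $a_{ij}$ enter, with a linear leading term). Under the hypothesis $\deg_u h\leq\lambda d$ the range of useful $k$ has length $\Theta(d)$, so processing the equations $c_k=0$ in decreasing $k$ peels off one new pivot $a_{ij}$ at each step; iterating $\sim d(1-1/p)\geq d/2$ times gives codimension $\geq d/2$ and a Schwartz--Zippel-type count (or iterated application of Lemma~\ref{lem:bound}) then yields $A(d)\ll q^{dn-d/2+1}$.

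\noindent\emph{Main obstacle.} The hard part is verifying non-degeneracy at each elimination step in the triangular system. The argument splits into cases. When some $\partial h/\partial T_i\neq 0$, the contribution $\partial h/\partial T_i(u,\bft)\cdot t_i'(u)$ supplies a nontrivial leading term in the relevant $a_{ij}$ and the pivots come from $t_i$'s coefficients. When instead \emph{all} $\partial h/\partial T_i=0$, we have $h\in\mathbb{F}_q[u,T_1^p,\ldots,T_n^p]$; the terms $\partial h/\partial T_i(u,\bft)\cdot t_i'(u)$ vanish identically (because $T_i^p$ differentiates to $0$ in characteristic $p$), and one must exploit $\partial h/\partial u\neq 0$ by writing $h(\bft)=H(u,t_1^p,\ldots,t_n^p)$ with $s_i(v):=t_i(v^{1/p})^p$ having coefficients the Frobenius twists $a_{ij}^p$, and re-run the triangular analysis for the new polynomial $\partial_u H$ on the variables $a_{ij}^p$. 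This Frobenius descent, together with the bookkeeping needed to ensure that the window of accessible $k$-values remains of length $\Theta(d)$ in both cases, is the principal technical point.
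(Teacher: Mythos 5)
Your opening moves match the paper's: since $\FF_q$ is perfect, $h(\bft)$ is a $p$-th power iff $\frac{d}{du}h(\bft)=0$, and the case split according to whether all $\partial h/\partial T_i$ vanish is exactly the paper's. (In the case where all $\partial h/\partial T_i=0$ you over-complicate matters: there the chain rule gives $\frac{d}{du}h(\bft)=\frac{\partial h}{\partial u}(\bft)$, a single nonzero polynomial condition on $\bft$, so Lemma~\ref{lem:bound} already yields $O(q^{d(n-1)})$ with no Frobenius descent needed.) The divergence, and the problem, is in the main case: your ``triangular elimination'' on the coefficient equations $c_k(\bfa)=0$ is not an argument but a hope, and the non-degeneracy you flag as the ``main obstacle'' is precisely the content of the lemma. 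Concretely: (i) consecutive $c_k$ can vanish identically due to characteristic-$p$ cancellation --- e.g.\ for $h=T_1T_2$ in characteristic $2$ the coefficient of $u^{2d-3}$ in $t_1't_2+t_1t_2'$ is $2a_{1,d-1}a_{2,d-1}=0$ --- so one cannot claim each $c_k$ supplies a fresh pivot; (ii) the pivot variables generally enter $c_k$ multiplied by leading coefficients that are themselves polynomials in the remaining $a_{ij}$ and can vanish on positive-density strata, so the elimination requires a recursive stratification you have not set up; and (iii) even granting $m\ge d/2$ pivots, a naive iterated Schwartz--Zippel count multiplies the pivot degrees, giving a factor like $(\deg_\bfT h)^{d/2}$ that destroys the bound unless every pivot is genuinely linear. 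None of these is addressed, so the proof is incomplete at its central step.

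For comparison, the paper avoids the coefficient-by-coefficient analysis entirely. After reducing to the univariate case $g(T)=h(t_1,\dots,t_{n-1},T)$ with $\partial g/\partial T\neq 0$ (discarding $O(q^{d(n-1)})$ bad tuples via Lemma~\ref{lem:bound}), it picks an auxiliary irreducible $r(u)$ of degree $\lfloor d/2\rfloor$ not dividing $g$ or $\partial g/\partial T$, and observes that the $p$-th powers in $\FF_q[u]/r^2$ form a set $\mathcal S$ of size $O(q^{\lfloor d/2\rfloor})$, i.e.\ of density about $q^{-d/2}$. Hensel's lemma (using $\partial g/\partial T\not\equiv 0\bmod r$) shows the fibers of $t\mapsto g(t)\bmod r^2$ over each residue are of size $O(q^{d-2\lfloor d/2\rfloor})$, whence the count $O(\#\mathcal S\cdot q)=O(q^{d/2+1})$. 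If you want to salvage your approach you would need to prove the triangular non-degeneracy in full generality; the local (mod $r^2$) sieve is a much shorter route to the same exponent.
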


\begin{proof}
	We fix $\lambda>0$ and a bound on $\deg_\bfT h$. Since every element in $\FF_q$ is a $p$-th power and since $h$ is not a $p$-th power, $h$ is not a polynomial in $v^p$ for at least one of the variables $v\in \{u,T_1,\ldots, T_n\}$. Equivalently, $\frac{\partial h}{\partial v}\neq 0$. Similarly, for $\bft\in M_{d}^n$, $h(\bft)$ is a $p$-th power if and only if $d (h(\bft))/du=0$.

	If $\frac{\partial h}{\partial T_i}=0$ for all $i$, hence $\frac{\partial h}{\partial u} \neq 0$, then by the chain rule,  for all $\bft \in M_{d}^{n}$ we have 
	\[
		\frac{d (h(\bft))}{d u} =  \frac{\partial h}{\partial u} (\bft).
	\]
	By Lemma~\ref{lem:bound}, 
	\[
		\frac{A(d)}{q^{dn}}  = \frac{\#\{\bft\in M_d^n : \frac{\partial h}{\partial u} (\bft)=0\} }{q^{dn}}\ll q^{-d},
	\]
	and the proof is done. 
	
	Next assume that $\frac{\partial h}{\partial T_i}\neq 0$ for some $i$. We can reduce this case to the univariate case (i.e.\ $n=1$): To ease notation assume $i=n$.  For  $t_1,\ldots, t_{n-1}\in M_d^{n-1}$ consider the polynomial $g(T) = h(t_1,\ldots, t_{n-1},T)\in\FF_q[u,T]$. Since 
	\[
		\frac{\partial g}{\partial T} = \frac{\partial h}{\partial T_n} (t_1,\ldots, t_{n-1},T)
	\]
	we get by Lemma~\ref{lem:bound} that ${\partial g}/{\partial T}\neq 0$ for all $(t_1,\ldots, t_{n-1})\in M_d^{n-1}$ but $O(q^{d(n-2)})$ such $(n-1)$-tuples. So $n$-tuples $\bft\in M_{d}^n$ for which ${\partial g}/{\partial T}=0$, contribute at most $O(q^{d(n-1)})$ to the total number of $n$-tuples  with $h(\bft)$ a $p$-th power.
	Therefore, we may assume that ${\partial g}/{\partial T}\neq 0$ and it suffices to  prove that under this assumption 
	\begin{equation}\label{eq:count}
		\#\{ t\in M_d : g(t) \in K^p\} = O(q^{d/2+1}),
	\end{equation}
	in order to conclude the proof. 
	
	Take $r(u) \in \FF_q[u]$ irreducible of degree $\lfloor \frac d2 \rfloor$ such that $g,\partial g/\partial T\not\equiv 0\pmod r$. Such an $r$ exists since by our assumptions $\deg_ug,\deg_u(\partial g/\partial T)=O(\deg_uh)=O(d)$ and so for all but $O(1)$ irreducible $r$ of degree $\lfloor \frac d2 \rfloor$ one of the coefficients of $g$ (and of $\partial g/\partial T$) is not divisible by $r$. 
	
	Let $\mathcal{A} = \FF_q[u]/r^2$ and let $\mathcal{S}=\{c^p : c\in \mathcal A\}$ be the set of $p$-th powers modulo $r^2$. We have $\#\mathcal{S} = O\left(q^{\lfloor d/2\rfloor}\right)$: Indeed,  $p$-th powers modulo $r^2$ can come either from the non-invertibles $r\mathcal A$ or from the subgroup $\left(\mathcal A^*\right)^p$ of $\mathcal A^*$ and both sets are of size $O\left(q^{\lfloor d/2\rfloor}\right)$.
	
	For $t\in M_d$, if $g(t) \in K^p$, then $g(t) \mod r^2\in \mathcal S$. 
	Since $\partial g/\partial T\not\equiv 0\pmod r$ we have
\[
\#\left\{t\in M_d:\frac{\partial g}{\partial T}(t)\equiv 0\pmod r\right\}=O\left(q^{d-\deg r}\right)=O\left( q^{d/2+1}\right).
\]
	Hence it suffices to show that
\[
\label{noncritical}\#\left\{t\in M_d:g(t)\bmod r^2\in\mathcal S,\frac{\partial g}{\partial T}(t)\not\equiv 0\pmod r\right\}=O\left( q^{d/2+1}\right).
\]
	Fix $s\in \mathcal S$. Since $g\not\equiv 0\pmod r$ there are $O(1)$ solutions to $g(\tau)\equiv s\pmod r$. If $\tau\in\FF_q[u]/r$ is a root of $g\bmod r$ and $\frac{\partial g}{\partial T}(\tau)\not\equiv 0\pmod r$ then by Hensel's lemma, there are $O(1)$ solutions to $g(\tau') \equiv s\pmod {r^2}$ with $\tau'\equiv\tau\pmod r$ and each of them lifts to $q^{d-2\lfloor d/2\rfloor}$ solutions of $g(t)\equiv s\pmod{r^2},t\in M_d$. So the total number of solutions to $g(t)\equiv s\pmod{r^2},\frac{\partial g}{\partial T}(t)\not\equiv 0\pmod{r^2},t\in M_d$ is bounded by $\#\mathcal{S}\cdot q=O(q^{d/2+1})$, as required.
\end{proof}

Now we may deduce the quantitative Hilbert's irreducibility theorem for inseparable polynomials.

\begin{theorem}\label{thm:nonsep}
	Let $f(\bfT,X)\in K [T_1,\ldots, T_n,X]$ be an irreducible polynomial. Let $H_f(d)$ be the number of tuples $\bft\in M_{d}^n$ for which $f(\bft,X)$ is irreducible. Fix a constant $\lambda>0$ and assume $\deg_u f\le\lambda d$. Then we have  
	\[
		\frac{H_f(d)}{q^{dn}} = 1 + O_{\deg_{\bfT,X}f,\lambda}(d q^{-d/2+r}),
	\]
	where $r\leq deg_X(f)$ is the degree of the algebraic closure $\FF_q^r$ of $\FF_q$ in the splitting field of $f$ over $K(\bfT)$.
\end{theorem}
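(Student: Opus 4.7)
The plan is to reduce Theorem~\ref{thm:nonsep} to the separable case handled in Theorem~\ref{thm:uniform} by means of Uchida's criterion (Lemma~\ref{lem:irrpthpower}) combined with Lemma~\ref{lem:pthpower}. First, I would invoke Corollary~\ref{cor:reduction} to assume without loss of generality that $f\in\FF_q[u,\bfT,X]$ is monic in $X$ (the excluded set contributes $O(q^{d(n-1)})$, well within the error term). If $f$ is separable in $X$ the result is immediate from Theorem~\ref{thm:uniform}, so I may assume $f$ is inseparable and write $f(\bfT,X)=g(\bfT,X^{p^\alpha})$ with $\alpha\ge 1$ maximal; the maximality of $\alpha$ forces $g\in\FF_q[u,\bfT,X]$ to be monic and separable in $X$, and any nontrivial factorization of $g$ would, after the substitution $X\mapsto X^{p^\alpha}$, give one of $f$, so $g$ is irreducible.

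The crux is to extract, from the irreducibility of $f$, a coefficient $c_i(\bfT)\in\FF_q[u,\bfT]$ of $g$ (as a polynomial in $X$, with $0\le i<\deg_X g$) which is not a $p$-th power in $\FF_q[u,\bfT]$. Indeed, if all such coefficients satisfied $c_i=h_i^p$, then in characteristic $p$ the Freshman's Dream gives
\[
  g(\bfT,X^p)=X^{p\deg_X g}+\sum_i h_i(\bfT)^p X^{pi}=\Bigl(X^{\deg_X g}+\sum_i h_i(\bfT)\,X^i\Bigr)^p,
\]
and substituting $X\mapsto X^{p^{\alpha-1}}$ would display $f=g(\bfT,X^{p^\alpha})$ as a $p$-th power, contradicting its irreducibility.

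With such a $c_i$ in hand, I apply Theorem~\ref{thm:uniform} to $g$ (noting $\deg_{\bfT,X}g\le\deg_{\bfT,X}f$ and $\deg_u g=\deg_u f\le\lambda d$), obtaining that $g(\bft,X)$ is irreducible for all but $O_{\deg_{\bfT,X}f,\lambda}(dq^{-d/2+r_g})$ tuples $\bft\in M_d^n$, and Lemma~\ref{lem:pthpower} to $c_i$, giving that $c_i(\bft)$ is a $p$-th power in $K$ for at most $O(q^{dn-d/2+1})$ tuples. A short argument identifies $r_g=r$: the splitting field of $f$ is purely inseparable over that of $g$ (the roots of $f$ are $p^\alpha$-th roots of the roots of $g$), and since $\FF_q$ is perfect, the algebraic closure of $\FF_q$ does not grow in a purely inseparable extension.

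For any $\bft$ outside both exceptional sets, Lemma~\ref{lem:irrpthpower} applied to $g(\bft,X)\in K[X]$ yields that $f(\bft,X)=g(\bft,X^{p^\alpha})$ is irreducible. Summing the two error terms and using $r\ge 1$ to absorb $q^{-d/2+1}$ into $dq^{-d/2+r}$ gives the claimed bound. The main obstacle is the key step of producing a non-$p$-th-power coefficient of $g$; this is precisely the phenomenon (not visible in characteristic zero) that prevented a direct transposition of Cohen's argument, and Uchida's criterion is what allows us to handle it.
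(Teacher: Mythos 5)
Your proposal is correct and follows essentially the same route as the paper's proof: reduce to a monic polynomial in $\FF_q[u,\bfT,X]$ via Corollary~\ref{cor:reduction}, write $f=g(\bfT,X^{p^\alpha})$ with $g$ irreducible and separable, observe that some coefficient of $g$ is not a $p$-th power (else $f$ would be a $p$-th power, hence reducible), and combine Theorem~\ref{thm:uniform} for $g$ with Lemma~\ref{lem:pthpower} and Uchida's criterion (Lemma~\ref{lem:irrpthpower}). Your explicit verifications of the non-$p$-th-power coefficient and of the identification $r_g=r$ (via pure inseparability of the splitting field of $f$ over that of $g$ and perfectness of $\FF_q$) are details the paper leaves implicit, but the argument is the same.
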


\begin{proof}
	If $f$ is separable in $X$ this is Theorem \ref{thm:uniform}. Hence we assume that $f$ is inseparable in $X$. By Corollary~\ref{cor:reduction}, we may assume w.l.o.g.\ that $f\in \FF_q[u,\bfT,X]$ and that $f$ is monic in $X$. 

	Since $f$ is inseparable, we may write $f(\bfT,X) = g(\bfT,X^{p^{\alpha}})$ for some $g$ that is irreducible, separable in $X$ and of degree $\deg_X g\leq \deg_Xf$ and for some $\alpha>0$. Let $\FF_{q^r}$ be the algebraic closure of $\FF_q$ inside a splitting field of $g$ over $K(\bfT)$. 
	
	Write $g(\bfT,X) = \sum_i g_i(\bfT) X^i$, with $g_i\in \FF_q[u,\bfT]$. Had all the $g_i$ been $p$-th powers, we would have that $f$ is reducible. Hence there exists $g_i$ which is not a $p$-th power. 
	
	If $g_i(\bft)$ is not a $p$-th power and $g(\bft,X)$ is irreducible, then by Lemma~\ref{lem:irrpthpower}, $f(\bft,X)=g(\bft,X^{p^{\alpha}})$ is irreducible. 
	Applying Theorem~\ref{thm:uniform} and Lemma~\ref{lem:pthpower}, gives that this happens for all but $O_{\deg_{\bfT,X}f,\lambda}(dq^{d(n-1/2)+r})$ of the $\bft$-s (as long as $\deg_uf\le\lambda d$), hence the proof is complete.  
	
\end{proof}

\end{document}